\documentclass[preprint,12pt]{elsarticle}
\usepackage{amsmath,amssymb,amsthm,color,url}
\sloppy

\newtheorem{thr}{Theorem}

\newtheorem{cor}[thr]{Corollary}

\newtheorem{con}[thr]{Conjecture}

\theoremstyle{definition}

\newtheorem{quest}[thr]{Question}

\theoremstyle{remark}

%\numberwithin{equation}{section}
%\setcounter{MaxMatrixCols}{21}

%\journal{Linear Algebra and Its Applications}

\begin{document}

\begin{frontmatter}

\title{Three observations on spectra of zero-nonzero patterns}

\author{Yaroslav Shitov}

\ead{yaroslav-shitov@yandex.ru}

\address{National Research University Higher School of Economics, 20 Myasnitskaya Ulitsa, Moscow 101000, Russia}

\begin{abstract}
Using standard techniques from combinatorics, model theory, and algebraic geometry, we prove generalized versions of several basic results in the theory of spectrally arbitrary matrix patterns. Also, we point out a counterexample to a conjecture proposed recently by McDonald and Melvin.
\end{abstract}

\begin{keyword}
matrix theory, eigenvalues, zero pattern

\MSC[2010] 15A18, 15B35
\end{keyword}

\end{frontmatter}

The study of spectra of zero-nonzero matrix patterns began more than a decade ago (see~\cite{BMOD, DJOD}), and a considerable amount of publications related to matrix completion problems are devoted to this topic. To begin with, recall that an $n\times n$ \textit{zero-nonzero pattern} is a matrix with entries $*$ and $0$. In other words, we can think of a zero-nonzero pattern as a class of $n\times n$ matrices which have non-zero elements at the same positions, which are indicated by the $*$ sign. Such a pattern $S$ is called \textit{spectrally arbitrary} with respect to a field $\mathbb{F}$ if any monic polynomial $f\in\mathbb{F}[t]$ of degree $n$ arises as the characteristic polynomial of a matrix with entries in $\mathbb{F}$ and pattern $S$.

Several well known results on this topic are usually being formulated in the case of the real numbers, and questions often arise as to whether or not the corresponding results are true over other fields. Many results of this kind admit natural generalizations for matrices over arbitrary fields, but some of these generalizations seem to remain unknown for the community. Questions of this kind occasionally appear as \textit{'open problems'} in the literature, so we believe it would be helpful to clarify the situation when possible.

\section{Employing elementary equivalence}

A \textit{first-order formula} in the language of fields is an expression constructed from polynomial equations with integer coefficients via the logical connectors and quantifiers. Such a formula is called a \textit{first-order sentence} if it does not contain free variables, that is, any sentence has a well-defined truth value over any field (although this value can, of course, depend on the field).

We will show that, for any $n\times n$ zero-nonzero pattern $S$, there is a first-order sentence $\Phi_S$ which is true over a field $\mathbb{F}$ if and only if $S$ is spectrally arbitrary with respect to $\mathbb{F}$. We denote by $\Sigma_S$ the set of all positions $(i,j)$ satisfying $S_{ij}=*$, and we introduce the two variables $x_{ij}, y_{ij}$ for any such $(i,j)$. We denote by $X_S$ the matrix obtained from $S$ by replacing every $*$ with a corresponding variable $x_{ij}$, and we denote by $\chi_{k,S}$ the $k$th coefficient of the characteristic polynomial of $X_S$. Clearly, the formula
$$\forall t_1\ldots\forall t_n\,\,\,
\exists (x_{ij})\exists (y_{ij})\,\,\,
\left(\bigwedge_{(i,j)\in\Sigma_S} x_{ij}y_{ij}=1\right)\wedge \left(\bigwedge_{k=1}^n\chi_{k,S}=t_k\right)
$$
is a desired first-order sentence $\Phi_S$. (Note that the equations $x_{ij}y_{ij}=1$ are needed to ensure that the $x_{ij}$'s are non-zero.)

Now we employ the standard result of model theory stating that a pair of algebraically closed fields of the same characteristic are \textit{elementarily equivalent}, that is, any first-order sentence takes the same truth value with respect to any of these fields. Also, the same result holds for any pair of fields that are \textit{real closed} (that is, fields over which $x^2+1$ is an irreducible polynomial and which become algebraically closed after adjoining the roots of this polynomial). The proofs and more thorough discussions of these results can be found in model theory textbooks such as~\cite{Eng}. We get the following corollaries. % of the above discussion.

\begin{cor}\label{obst1}
A zero-nonzero pattern is spectrally arbitrary over $\mathbb{C}$ if and only if it is spectrally arbitrary over the algebraic closure of $\mathbb{Q}$.
\end{cor}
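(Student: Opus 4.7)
The plan is to deduce the corollary as an immediate application of the construction of $\Phi_S$ together with the model-theoretic fact recalled just before the statement. Concretely, I would proceed as follows.

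First I would invoke the sentence $\Phi_S$ built above: by its defining property, $S$ is spectrally arbitrary over a field $\mathbb{F}$ if and only if $\Phi_S$ is true in $\mathbb{F}$. So it suffices to show that $\Phi_S$ takes the same truth value in $\mathbb{C}$ and in $\overline{\mathbb{Q}}$. Next I would observe that both $\mathbb{C}$ and $\overline{\mathbb{Q}}$ are algebraically closed fields of characteristic $0$. Applying the cited result from model theory, which asserts that any two algebraically closed fields of the same characteristic are elementarily equivalent, we conclude that every first-order sentence in the language of fields, and in particular $\Phi_S$, has the same truth value in $\mathbb{C}$ as in $\overline{\mathbb{Q}}$. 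Combining these two steps yields the corollary.

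Since essentially all of the content is packaged into the preliminary discussion, there is no genuine obstacle left. The only point worth checking explicitly is that $\Phi_S$ is indeed a first-order sentence in the language of fields in the precise sense required by the elementary equivalence theorem: the universal and existential quantifiers range only over field elements, the matrix entries $\chi_{k,S}$ are polynomial expressions in the $x_{ij}$ with integer coefficients (coming from the usual expansion of the characteristic polynomial), and the auxiliary equations $x_{ij}y_{ij}=1$ are of the required polynomial form. Once this is noted, the conclusion is immediate.
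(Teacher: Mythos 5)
Your proposal matches the paper's argument exactly: the corollary is obtained by combining the defining property of the sentence $\Phi_S$ with the elementary equivalence of algebraically closed fields of equal characteristic, applied to $\mathbb{C}$ and $\overline{\mathbb{Q}}$ in characteristic $0$. Your explicit check that $\Phi_S$ is a bona fide first-order sentence in the language of fields is a worthwhile addition but does not change the route.
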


\begin{cor}\label{obst2}
A zero-nonzero pattern is spectrally arbitrary over $\mathbb{R}$ if and only if it is spectrally arbitrary over the real closure of $\mathbb{Q}$.
\end{cor}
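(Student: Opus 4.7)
The plan is to apply the elementary equivalence of real closed fields directly to the first-order sentence $\Phi_S$ constructed above. First, I would verify that both $\mathbb{R}$ and the real closure of $\mathbb{Q}$ (i.e., the field of real algebraic numbers, which I shall denote $\mathbb{Q}^{\mathrm{rc}}$) satisfy the definition of real closed field given in the excerpt: in each case $x^2+1$ is irreducible, and adjoining a root of $x^2+1$ produces $\mathbb{C}$ and the algebraic closure of $\mathbb{Q}$ respectively, both of which are known to be algebraically closed.

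Next, I would invoke the model-theoretic transfer principle quoted from~\cite{Eng} to conclude that the sentence $\Phi_S$, being a first-order sentence in the language of fields, takes the same truth value over $\mathbb{R}$ as over $\mathbb{Q}^{\mathrm{rc}}$. Since $\Phi_S$ was designed precisely so that its truth value over a field $\mathbb{F}$ is equivalent to the spectral arbitrariness of $S$ over $\mathbb{F}$, the corollary follows at once. This is exactly parallel to the argument that yields Corollary~\ref{obst1}, with the algebraically closed transfer principle replaced by the real closed one.

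Because the substantive work (construction of $\Phi_S$ and the appeal to elementary equivalence) has already been carried out in the material leading up to Corollary~\ref{obst1}, there is no real obstacle remaining; this statement is a direct two-line application. The only point meriting any care is the verification that $\mathbb{Q}^{\mathrm{rc}}$ qualifies as real closed under the author's definition, but this is entirely standard and follows from the fact that the algebraic closure of $\mathbb{Q}$ is $\mathbb{Q}^{\mathrm{rc}}(i)$.
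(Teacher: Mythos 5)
Your proposal is correct and follows exactly the paper's intended argument: the sentence $\Phi_S$ encodes spectral arbitrariness, and the completeness of the theory of real closed fields (elementary equivalence of any two real closed fields) transfers its truth value between $\mathbb{R}$ and the real closure of $\mathbb{Q}$. Your additional check that the real closure of $\mathbb{Q}$ satisfies the author's definition of real closed is a reasonable, standard verification that the paper leaves implicit.
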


Our discussion and, in particular, Corollary~\ref{obst1} answer Question~2 of Section~6 in~\cite{Mel} and improve the results of Corollaries~9 and~12 in~\cite{McM}. Corollary~\ref{obst2} confirms Conjecture~1.15 in~\cite{Mel}.

\section{Counting transcendence degrees}

An $n\times n$ zero-nonzero pattern $S$ is called \textit{irreducible} if, for any partition of the indexing set $\{1,\ldots,n\}$ into two disjoint subsets $(I,J)$, there are $i\in I$ and $j\in J$ such that $S_{ij}=*$. It is well known (see Lemma~2.3 in~\cite{BMOD}) that any such pattern $S$ has a set $U$ of $n-1$ non-zero positions such that every matrix with sign pattern $S$ is similar to a matrix with the same pattern and over the same field which has ones at the positions in $U$. A straightforward counting argument shows that no irreducible $n\times n$ zero-nonzero pattern with less than $2n$ non-zero elements can be spectrally arbitrary with respect to any finite field. (Due to the existence of irreducible polynomials of any degree over finite fields, the word 'irreducible' can be removed from the previous sentence. This result is attributed to Shader in~\cite{McM}, and it gives the finite field version of the so-called $2n$\textit{-conjecture}.)

Is the $2n$-conjecture true over any field? No. The paper~\cite{mysp} presents an explicit example of a $708\times 708$ pattern which contains only $1415$ non-zero entries and is spectrally arbitrary with respect to $\mathbb{C}$. However, the number of non-zero entries in \textit{irreducible} spectrally arbitrary patterns is bounded from below by $2n-1$. This result was stated and proved in~\cite{BMOD} over $\mathbb{R}$, but it is valid over any field. More than that, even the proof given there can be adapted for the general case.

\begin{thr}\label{thrt1}
Let $S$ be an $n\times n$ irreducible zero-nonzero pattern that is spectrally arbitrary over a field $\mathbb{F}$. Then $S$ has at least $2n-1$ non-zero entries.
\end{thr}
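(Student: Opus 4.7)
The strategy is to upgrade the real-analytic dimension count of~\cite{BMOD} to a transcendence-degree argument valid over any infinite field, and to dispose of the finite case via the counting remark given before the theorem.

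First I would apply the normalization lemma of~\cite{BMOD} quoted in the preceding paragraph: after a diagonal similarity we may assume that $n-1$ of the non-zero entries of our matrix equal $1$. Writing $m$ for the total number of non-zero entries of $S$ and setting $k := m - (n-1)$, this leaves $k$ entries $a_1, \ldots, a_k$ ranging freely over $\mathbb{F}^*$. The coefficients $\chi_1, \ldots, \chi_n$ of the characteristic polynomial of the normalized matrix are then polynomials in $a_1, \ldots, a_k$ with integer coefficients, and being spectrally arbitrary amounts to saying that the polynomial map $\psi := (\chi_1, \ldots, \chi_n) : (\mathbb{F}^*)^k \to \mathbb{F}^n$ is surjective.

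Now suppose, for contradiction, that $m \le 2n-2$, i.e.\ that $k \le n-1$. If $\mathbb{F}$ is finite, the counting argument recalled before the theorem already yields the stronger bound $m \ge 2n$, a contradiction. If $\mathbb{F}$ is infinite, observe that the ring $\mathbb{F}[a_1, \ldots, a_k]$ has transcendence degree $k \le n-1$ over $\mathbb{F}$, so any $n$ elements of it are algebraically dependent over $\mathbb{F}$. Hence there exists a non-zero $p \in \mathbb{F}[t_1, \ldots, t_n]$ such that $p(\chi_1, \ldots, \chi_n) = 0$ as an identity in $a_1, \ldots, a_k$, which forces $p$ to vanish on every point in the image of $\psi$.

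To finish, I would invoke the standard fact that a non-zero polynomial in $n$ variables over an infinite field cannot vanish on all of $\mathbb{F}^n$ (fix variables one at a time while keeping $p$ non-zero, and use that a univariate non-zero polynomial has finitely many roots). This contradicts surjectivity of $\psi$, completing the proof. The main obstacle, compared with the argument in~\cite{BMOD} over $\mathbb{R}$, is replacing the real-analytic dimension count by the purely algebraic transcendence-degree bound; once this substitution is made the remainder is essentially bookkeeping, with finite fields peeled off separately as above.
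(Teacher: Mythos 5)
Your proposal is correct and follows essentially the same route as the paper: normalize $n-1$ entries to ones via the lemma from~\cite{BMOD}, observe that $k\le n-1$ free entries force the $n$ characteristic-polynomial coefficients to be algebraically dependent over $\mathbb{F}$, and derive a contradiction from a non-zero polynomial vanishing on all of $\mathbb{F}^n$ for infinite $\mathbb{F}$, with finite fields handled separately by the counting remark. No substantive differences.
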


\begin{proof}
The validity of the result when $\mathbb{F}$ is finite was pointed out above, and we assume in the rest of the proof that $\mathbb{F}$ is infinite. Let $U$ be a set of positions as in the first paragraph of this section. We denote by $X$ the matrix obtained from $S$ by replacing the entries in $U$ by ones and replacing every of the other $k$ stars by the variables $x_1,\ldots,x_k$. By the definition of $U$, and since $S$ is spectrally arbitrary, every monic polynomial of degree $n$ over $\mathbb{F}$ can be realized as the characteristic polynomial of a matrix obtained from $X$ by replacing the variables by elements of $\mathbb{F}$.

Let $t^n+c_{n-1}t^{n-1}+\ldots+c_0$ be the characteristic polynomial of $X$, where the $c_i$'s belong to $\mathbb{F}[x_1,\ldots,x_{k}]$. If the result was false, we would have $k<n$, which would mean that the polynomials $c_0,\ldots,c_{n-1}$ are algebraically dependent over $\mathbb{F}$. In other words, there would exist a non-zero polynomial $\psi\in\mathbb{F}[\tau_1,\ldots,\tau_n]$ such that $\psi(c_0,\ldots,c_{n-1})=0$. As said in the above paragraph, the tuple $(c_0,\ldots,c_{n-1})$ can take arbitrary values in $\mathbb{F}^n$, so the polynomial $\psi$ vanishes over the whole $\mathbb{F}^n$. This is impossible because $\mathbb{F}$ is an infinite set, so we have reached a contradiction.
\end{proof}

This argument is almost explicit in the proof of Theorem~6.2 in~\cite{BMOD}. However, several recent works (see Theorem~15 in~\cite{McM} and Theorem~5.16, Corollary~5.17 in~\cite{Mel}) prove different special cases of Theorem~\ref{thrt1} with different techniques. Since none of these works mentions the possibility to generalize the result to the case of arbitrary fields, I believe it would be helpful to have an explicit proof of such a generalization in print.

\section{Taking an extension of a field}

Conjecture~6 in the recent paper~\cite{McM} stated the following.

\begin{con}\label{conMcM}
If $\mathbb{F}\subset\mathbb{K}$ is an extension of fields and a zero-nonzero pattern $S$ is spectrally arbitrary over $\mathbb{F}$, then $S$ is spectrally arbitrary over $\mathbb{K}$.
\end{con}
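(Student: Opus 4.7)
I would argue that the conjecture is \emph{false} and sketch a counterexample. The strategy is to exhibit a pattern $S$ and fields $\mathbb{F}\subset\mathbb{K}$ such that the characteristic-polynomial map
\[
\Psi_S\colon(\mathbb{F}^*)^k\longrightarrow \mathbb{F}^n
\]
(with $k$ the number of stars of $S$) is surjective over $\mathbb{F}$ but its natural extension $\Psi_S\colon(\mathbb{K}^*)^k\to\mathbb{K}^n$ fails to be surjective. The appeal of the conjecture rests on the intuition that a larger field gives a larger domain of matrices to choose from; but a larger field also means a larger codomain of target polynomials, and surjectivity of a polynomial map is simply not an extension-stable property.

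The toy archetype I have in mind is the squaring map $x\mapsto x^2$, which is surjective from $\mathbb{C}$ onto $\mathbb{C}$ but which fails to be surjective as a map $\mathbb{C}(t)\to\mathbb{C}(t)$, because the rational function $t$ has odd order at $0$ and hence is not a square in $\mathbb{C}(t)$. The plan is to engineer this type of valuation-theoretic obstruction inside the characteristic-polynomial map of some pattern. First, I would pick a pattern $S$ (with at least $2n-1$ nonzero entries, as forced by Theorem~\ref{thrt1}) whose char-poly map factors, along at least one coordinate, through a polynomial operation whose image covers $\mathbb{F}$ but misses a specific element of $\mathbb{K}$. Second, I would verify spectral arbitrariness over $\mathbb{F}$ directly by writing down, for each monic $f\in\mathbb{F}[t]$ of degree $n$, an explicit realization with entries in $\mathbb{F}$. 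Third, I would exhibit a specific target $g\in\mathbb{K}[t]$ and show that realizing $g$ would force solving an equation with no solution in $\mathbb{K}$ (for instance, extracting a square root of a non-square rational function, or a cube root of a non-cube element of $\mathbb{F}_{25}^*$).

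The main obstacle is the third step: one must rule out \emph{every} matrix with pattern $S$ and entries in $\mathbb{K}$, not just the obvious ones. Two candidate pairs $(\mathbb{F},\mathbb{K})$ look most promising. One is $(\mathbb{C},\mathbb{C}(t))$, where divisor/valuation arguments cleanly prevent the extraction of certain roots and can be leveraged against any attempted realization. The other is a finite extension $(\mathbb{F}_q,\mathbb{F}_{q^r})$ in which some map $x\mapsto x^d$ is bijective on $\mathbb{F}_q^*$ (helping SA over $\mathbb{F}_q$ hold) but not on $\mathbb{F}_{q^r}^*$; the cleanest instance is $q=5$, $r=2$, $d=3$, where cubing permutes $\mathbb{F}_5^*$ but is $3$-to-$1$ on $\mathbb{F}_{25}^*$. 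In either case the delicate part is translating the abstract obstruction into an honest pattern whose char-poly map carries it, so that the squaring/cubing happens at a coordinate one genuinely has to invert.
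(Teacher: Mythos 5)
You are right that the conjecture is false, and you even land on the correct pair of fields ($\mathbb{C}\subset\mathbb{C}(t)$), but your proposal stops exactly where the real work begins: you never produce the pattern, and you explicitly concede that you do not know how to carry out step three, namely ruling out \emph{every} matrix over $\mathbb{K}$ with the given pattern. The missing idea is to use a \emph{reducible} pattern, which makes that step free. The paper takes $\mathcal{D}$ to be the $4\times 4$ direct sum of two full $2\times 2$ blocks. Every matrix with pattern $\mathcal{D}$ is block-diagonal, so its characteristic polynomial is a product of two quadratics over the field generated by its entries --- that is the entire "rule out every matrix" argument, with no valuation theory and no case analysis. Spectral arbitrariness over $\mathbb{C}$ holds because every monic quartic over $\mathbb{C}$ splits into two monic quadratics and the full $2\times 2$ pattern is spectrally arbitrary; failure over $\mathbb{C}(\xi)$ holds because $t^4+\xi$ is irreducible there. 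The obstruction is thus \emph{irreducibility of the target polynomial}, not non-surjectivity of a power map buried in one coordinate of $\Psi_S$.

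Your proposed mechanism --- engineering a squaring or cubing map into a coordinate of the characteristic-polynomial map and then showing no alternative realization circumvents it --- is not known to be carriable out and is precisely the hard, unresolved part of the subject: for an \emph{irreducible} pattern the paper states that the conjecture remains open, which is a strong hint that the "honest valuation-theoretic obstruction inside an irreducible char-poly map" you are after is genuinely difficult to produce. Also note that your appeal to Theorem~\ref{thrt1} to insist on at least $2n-1$ nonzero entries is beside the point here, since that bound applies to irreducible patterns and the working counterexample is reducible (it happens to have $2n$ stars anyway). In short: correct verdict, correct field extension, but the decisive structural trick --- choose a decomposable pattern so that reducibility of the characteristic polynomial is forced --- is absent, and without it your plan does not close.
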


This conjecture is false. Since the $2\times 2$ pattern with all $*$'s is spectrally arbitrary over $\mathbb{C}$, and since every degree-four polynomial over $\mathbb{C}$ can be written as a product of two degree-two polynomials, the pattern
$$
\mathcal{D}=\left(\begin{array}{cccc}
*&*&0&0\\
*&*&0&0\\
0&0&*&*\\
0&0&*&*
\end{array}\right)
$$
is spectrally arbitrary with respect to $\mathbb{C}$. It is clear from the construction of $\mathcal{D}$ that the characteristic polynomial of any matrix with such a pattern is reducible over the field generated by the entries of the matrix. Therefore, there are no matrix $M$ with entries in the rational function field $\mathbb{C}(\xi)$ such that the pattern of $M$ is $\mathcal{D}$ and the characteristic polynomial of $M$ is $t^4+\xi$. By the way, this counterexample disproves a slightly more specific Conjecture~5.1 in~\cite{Mel} as well. However, we do not know if Conjecture~\ref{conMcM} remains false if we impose an additional assumption stating that $S$ is irreducible.

\section{Further work}

One of the problems that remain open was pointed out in the last sentence of the previous section. A more specific version of this question, asked by Melvin, looks quite intriguing.

\begin{quest}(See~\cite{Mel}.)
Does there exist a zero-nonzero pattern which is spectrally arbitrary with respect to $\mathbb{Q}$ but not with respect to $\mathbb{R}$?
\end{quest}

\begin{quest}(See~\cite{Mel}.)
Does there exist a zero-nonzero pattern which is spectrally arbitrary with respect to $\mathbb{R}$ but not with respect to $\mathbb{C}$?
\end{quest}

We recall that the $2n$-conjecture fails over $\mathbb{C}$ but remains open over $\mathbb{R}$. Similar questions over different fields are also of interest, and we believe that the following conjecture is within reach.

\begin{con}\label{concon}
Any $n\times n$ zero-nonzero pattern spectrally arbitrary with respect to $\mathbb{Q}$ has at least $2n$ nonzero elements.
\end{con}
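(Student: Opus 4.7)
The plan is to combine Theorem~\ref{thrt1} with arithmetic input specific to $\mathbb{Q}$.

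First, reducible patterns can be ruled out at once. If $S$ is not irreducible, then some simultaneous row-column permutation puts $S$ into block lower triangular form with two diagonal blocks of positive size, so the characteristic polynomial of every matrix with pattern $S$ factors as a product of two non-constant polynomials. Since $\mathbb{Q}$ admits irreducible polynomials of every degree (for instance $t^n-2$ by Eisenstein's criterion), such an $S$ cannot be spectrally arbitrary over $\mathbb{Q}$. Combined with Theorem~\ref{thrt1} this already gives $|S|\ge 2n-1$.

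The main step is to exclude $|S|=2n-1$. Assume for contradiction that $S$ is irreducible with exactly $2n-1$ nonzero entries and normalize $n-1$ of them to $1$ as in the proof of Theorem~\ref{thrt1}, so that the resulting matrix $X$ has $n$ variable entries $x_1,\ldots,x_n$. The map
$$
\Psi:(\mathbb{Q}^{\times})^n\longrightarrow\mathbb{Q}^n,\qquad (x_1,\ldots,x_n)\longmapsto(c_0,\ldots,c_{n-1}),
$$
has dominant $\overline{\mathbb{Q}}$-extension by the algebraic-independence argument inside the proof of Theorem~\ref{thrt1}, and since source and target have the same dimension this extension is generically finite of some degree $d\ge 1$. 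Spectral arbitrariness would force $\Psi$ to be surjective onto $\mathbb{Q}^n$, and the aim is to contradict this via a dichotomy on $d$.

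If $d\ge 2$ and the generic fiber is irreducible over the function field $\mathbb{Q}(c_0,\ldots,c_{n-1})$, Hilbert's irreducibility theorem provides a thin-complement set of $\mathbb{Q}$-rational tuples $(c_0,\ldots,c_{n-1})$ for which the fiber is still irreducible and hence has no $\mathbb{Q}$-point, contradicting surjectivity. If $d=1$ or the generic fiber splits over the function field, then $\Psi$ is essentially birational onto its image and at the very least misses the $\mathbb{Q}$-hypersurface cut out by the open condition $\prod_i x_i\ne 0$; one then needs to exhibit a $\mathbb{Q}$-point on that boundary hypersurface which lies outside $\Psi((\mathbb{Q}^{\times})^n)$. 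The second branch is the main obstacle: the $708\times 708$ example of~\cite{mysp} shows that over $\mathbb{C}$ an irreducible pattern with $2n-1$ entries can be spectrally arbitrary, so the argument must essentially use $\mathbb{Q}$-arithmetic---Hilbert irreducibility in the first branch, and a rational-point obstruction in the second. I expect the bulk of the work to consist of a uniform combinatorial analysis of the degeneration of the characteristic polynomial as some $x_i\to 0$, carried out over all strongly connected digraphs on $n$ vertices with $2n-1$ arcs; once this is in place the dichotomy should close the argument.
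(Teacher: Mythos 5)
This statement is Conjecture~\ref{concon}, which the paper explicitly leaves open (``we believe that the following conjecture is within reach''); there is no proof in the paper to compare against, so the only question is whether your argument actually closes the conjecture. It does not. Your first two steps are sound but only reproduce what the paper already has: reducible patterns are excluded because $\mathbb{Q}$ has irreducible polynomials of every degree (this is exactly the remark following the conjecture in the paper), and Theorem~\ref{thrt1} then gives the bound $2n-1$. The entire content of the conjecture is the jump from $2n-1$ to $2n$, and that is where your argument stops being a proof.

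Concretely, the gap is in the second branch of your dichotomy. (A side remark on the first branch: since the source $(\mathbb{Q}^{\times})^n$ is irreducible, the generic fiber of a dominant generically finite $\Psi$ is $\mathrm{Spec}$ of the field $\mathbb{Q}(x_1,\ldots,x_n)$ over $\mathbb{Q}(c_0,\ldots,c_{n-1})$ and is automatically irreducible, so the dichotomy is really just $d\ge 2$ versus $d=1$; the Hilbert-irreducibility step for $d\ge 2$ is plausible modulo routine care about the locus where the map fails to be finite.) But when $d=1$, your assertion that $\Psi$ ``at the very least misses the $\mathbb{Q}$-hypersurface cut out by $\prod_i x_i\ne 0$'' is not an argument: a birational morphism from $(\mathbb{G}_m)^n$ to $\mathbb{A}^n$ has constructible image whose complement lies in a proper closed subset, and nothing you say rules out that this complement has no $\mathbb{Q}$-points at all, i.e.\ that $\Psi$ is surjective on rational points. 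You acknowledge this yourself (``the second branch is the main obstacle,'' ``I expect the bulk of the work to consist of\ldots''), which means the proposal is a research program with its central case unresolved, not a proof. Exhibiting, for every strongly connected digraph on $n$ vertices with $2n-1$ arcs, a rational coefficient tuple outside the image is precisely the open problem.
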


As pointed out in~\cite{mysp}, the $2n$-conjecture over $\mathbb{C}$ fails mostly for the reason that the set of irreducible polynomials is not rich enough over that field. Since the rational irreducible polynomials can have arbitrary degree, one can assume without loss of generality that the pattern as in Conjecture~\ref{concon} is irreducible. Therefore, the technique used in~\cite{mysp} cannot invalidate Conjecture~\ref{concon}.

\end{document}